\newcommand{\R}{\mathbb R}
\newcommand{\E}{\mathbb E}
\newcommand{\Z}{\mathbb Z}
\newcommand{\N}{\mathbb N}
\newtheorem{theorem}{Theorem}[section] 
\newtheorem{lemma}[theorem]{Lemma}     
\newtheorem{definition}[theorem]{Definition}
\title{$n$-digit Benford converges to Benford}
\author{Azar Khosravani}
\address{Department of Science and Mathematics, Columbia College Chicago,
	Chicago, IL 60605}
\email{akhosravani@colum.edu}
\author{Constantin Rasinariu}
\address{Department of Science and Mathematics, Columbia College Chicago,
	Chicago, IL 60605}
\email{crasinariu@colum.edu}
\subjclass{11K06 (primary),  60E05 (secondary)}
\keywords{Benford's law, $n$-digit Benford random variables, sum invariance}
\begin{document}
	\maketitle
	
\begin{abstract}
		Using the sum invariance property of Benford random variables, we prove 
		that an $n$-digit Benford variable converges to a Benford variable as 
		$n$ approaches infinity.
\end{abstract}

\section{Introduction}
Given a positive real number $y$, and a positive integer $i$, we define 
$D_i(y)$ as the $i$-th significant digit of $y$, where $D_1:\mathbb{R}^{\,+}\to 
\{1,\ldots,9 \}$ and $D_i:\mathbb{R}^{\,+} \to \{0,1,\ldots,9 \}$ for $i>1$. 
Thus, $D_1(2.718)=2$ and $D_3(2.718)=1$. We assume base 10 throughout this paper.

Let $\mathcal A$ be the smallest sigma algebra generated by $D_i$.
Then $D_i^{-1}(d) \in \mathcal A$ for all $i$ and $d$.
Within this framework, a random variable $Y$ is \textit{Benford} 
\cite{Benford-1938,Hill-1995,Berger-2011} if for all $m \in 
\mathbb{N}$, $d_1\in \{1,\ldots,9 \}$ and $d_i\in \{0,1,\ldots,9 \}$ 
for $i>1$, the probability that the first $m$ digits of a real number are $d_1d_2\cdots d_m$ is given by
\begin{equation}
	\label{e:ben}
	P\Big(D_1(Y) = d_1,\ldots,D_m(Y) = d_m\Big)=\log \bigg(1+ 
	\Big(\sum_{j=1}^{m} 10^{m-j}d_j\Big)^{-1}\bigg)~.
\end{equation}

While Benford variables have logarithmic distributions in all of their digits, 
often times, in Benford literature the focus has only been on the distribution of the first digit. Such a limitation may obscure the true nature of the quantity investigated. There are data sets which exhibit a perfect ``Benford'' distribution in the first digit, but fail to do so in the second. Nigrini \cite{Nigrini-2012} provided such an example, and consequently recommended the use of the first two digit test in order to improve the recognition of the Benford datasets, and thus to identify financial fraud. He also recommended this approach for other accounting related  analysis.

Such cases were generalized in \cite{Khosravani-2013}, where  a new class of random variables, called \textit{$n$-digit Benford variables}, was introduced. These variables exhibit a logarithmic digit distribution  only in their first $n$ digits, but are not guaranteed to be logarithmically distributed beyond the $n$-th digit. Unlike Benford variables whose decimal logarithm is uniformly distributed \mbox{mod 1}, the decimal logarithm of  n-digit Benford random variables has less stringent constraints; it must only satisfy 
prescribed areas over a given partition of the unit interval. This provides us 
with a collection of random variables that contains the Benford variables as a subset. 

It is intuitive to assume that when $n$ goes to infinity, a $n$-digit Benford variable converges to Benford. The purpose of this paper is to prove that this is indeed the case.  

This paper is structured as follows: in the next section we introduce the 
$n$-digit Benford variables 
together with some of their properties. In section \ref{sed:SI} we briefly 
discuss sum invariance, which is fundamental for our 
main result.  Finally, using sum invariance, in section \ref{sec-main} we show that an 
$n$-digit Benford 
variable converges to Benford, as $n\to\infty$.

\section{$n$-digit Benford}
\label{sec:nBen}

An $n$-digit Benford random variable behaves as a Benford variable only in the 
first $n$-digits, but may not have a logarithmic digit distribution beyond the 
$n$th digit \cite{Khosravani-2013}.
\begin{definition}
	Let $n\in\N$. A random variable $Y$ is $n$-digit Benford if for all  
	$d_1\in 
	\{1,\ldots,9 \}$ and all $d_i\in \{0,1,\ldots,9 \}$, for $2\le i \le n$
	\begin{equation}
		\label{eq:nBen}
		P\Big(D_1(Y) = d_1,\ldots,D_n(Y) = d_n\Big)=\log \bigg(1+ 
		\Big(\sum_{j=1}^{n} 10^{n-j}d_j\Big)^{-1}\bigg)~.
	\end{equation}
\end{definition}
\noindent
Note that a Benford variable is an $n$-digit Benford variable, for any $n$.
\begin{lemma}
	\label{kBen}
	If $Y$ is $n$-digit Benford, then it is a $k$-digit 
	Benford, 
	for all $1 \le k < n$.
\end{lemma}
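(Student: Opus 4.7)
The plan is to marginalize the $n$-digit joint distribution down to the $k$-digit joint distribution by summing over the remaining $n-k$ digits, and then to recognize the resulting sum as a telescoping series of logarithms.

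Concretely, fix $d_1 \in \{1,\ldots,9\}$ and $d_i \in \{0,1,\ldots,9\}$ for $2 \le i \le k$. Since the events $\{D_1(Y) = d_1, \ldots, D_n(Y) = d_n\}$ partition the event $\{D_1(Y) = d_1, \ldots, D_k(Y) = d_k\}$ as $(d_{k+1},\ldots,d_n)$ ranges over $\{0,\ldots,9\}^{n-k}$, I would write
\begin{equation*}
P\Big(D_1(Y) = d_1,\ldots,D_k(Y) = d_k\Big) = \sum_{d_{k+1}=0}^{9}\cdots\sum_{d_n=0}^{9} \log\!\bigg(1 + \Big(\sum_{j=1}^{n} 10^{n-j} d_j\Big)^{-1}\bigg),
\end{equation*}
using the $n$-digit Benford hypothesis \eqref{eq:nBen}.

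The key observation is that if we set $N := \sum_{j=1}^{k} 10^{k-j} d_j$, then as $(d_{k+1},\ldots,d_n)$ varies over $\{0,\ldots,9\}^{n-k}$, the integer $M := \sum_{j=1}^{n} 10^{n-j} d_j = 10^{n-k} N + \sum_{j=k+1}^{n} 10^{n-j} d_j$ runs exactly once through each value in the range $\{10^{n-k} N,\, 10^{n-k} N + 1,\, \ldots,\, 10^{n-k}(N+1)-1\}$. The $(n-k)$-fold sum therefore collapses to
\begin{equation*}
\sum_{M = 10^{n-k} N}^{10^{n-k}(N+1) - 1} \log\!\left(\frac{M+1}{M}\right),
\end{equation*}
which telescopes to $\log\!\left(\frac{10^{n-k}(N+1)}{10^{n-k} N}\right) = \log\!\left(1 + \frac{1}{N}\right)$, precisely the $k$-digit Benford probability prescribed by \eqref{eq:nBen} with $n$ replaced by $k$.

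I do not anticipate any serious obstacle: the argument is entirely combinatorial-algebraic, with the only subtle point being the careful bookkeeping showing that the index $M$ sweeps through a contiguous block of integers without repetition, so that the telescoping identity applies cleanly.
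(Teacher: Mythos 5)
Your proof is correct and uses essentially the same idea as the paper: marginalize over the trailing digits and telescope the sum of $\log\!\left(\frac{M+1}{M}\right)$ over a contiguous block of integers. The only difference is that you handle general $k$ in a single $(n-k)$-fold sum, whereas the paper writes out only the case $k=n-1$ and leaves the iteration down to smaller $k$ implicit.
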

\begin{proof}
Let $k=n-1$. Then, by (\ref{eq:nBen}) 
\begin{eqnarray*}
 &&P\Big(D_1(Y)= d_1,\ldots,D_{n-1}(Y) = d_{n-1}\Big)\\
	&&= \sum\limits_{d_n=0}^{9}P\Big(D_1(Y) = d_1,\ldots,D_{n-1}(Y) = d_{n-1},D_n(Y) = d_n\Big) \\
	&&=\sum_{d_n=0}^{9}
	\log \bigg(1+ 
	\Big(\sum_{j=1}^{n} 10^{n-j}d_j\Big)^{-1}\bigg)\\
	&&=\log \Big( 
	\frac{10^{n-1}d_1+\cdots+10\, d_{n-1}+1}{10^{n-1}d_1+\cdots+10\, d_{n-1}}	\times \cdots \times
	\frac{10^{n-1}d_1+\cdots+10\,d_{n-1}+10}{10^{n-1}d_1+\cdots+10\, d_{n-1}+9}
	\Big)\\
	&&=\log\Big( 
	\frac{10^{n-1}d_1+\cdots+10\,d_{n-1}+10}{10^{n-1}d_1+\cdots+10\, d_{n-1}}
	\Big) = 
	\log \bigg(1+ 
	\Big(\sum_{j=1}^{n-1} 10^{n-j}d_j\Big)^{-1}\bigg)
	~.
\end{eqnarray*}
\end{proof}

As an example, let us consider the $2$-digit Benford variable $Y$ with the 
probability 
density function given by
\begin{equation}
\label{e:2-ben}
f(y) = \begin{cases}
\frac{\pi}{2 y \ln 10} 
\sin \left(\pi \beta_{d_1 d_2}(y)\right), 
&  d_1+\frac{d_2}{10} \le y < d_1+\frac{d_2+1}{10} \\
0, ~&\text{otherwise}
\end{cases}
\end{equation}
where $\beta_{d_1 d_2}(y)=( \log \frac{10 y}{10 d_1 + d_2})/(\log \frac{10 d_1 + d_2 
+1}{10 d_1 + d_2})$. Its graph is illustrated in figure (\ref{fig:2digit-Benford}).
\begin{figure}[tbh]
\centering
\includegraphics[width=\linewidth]{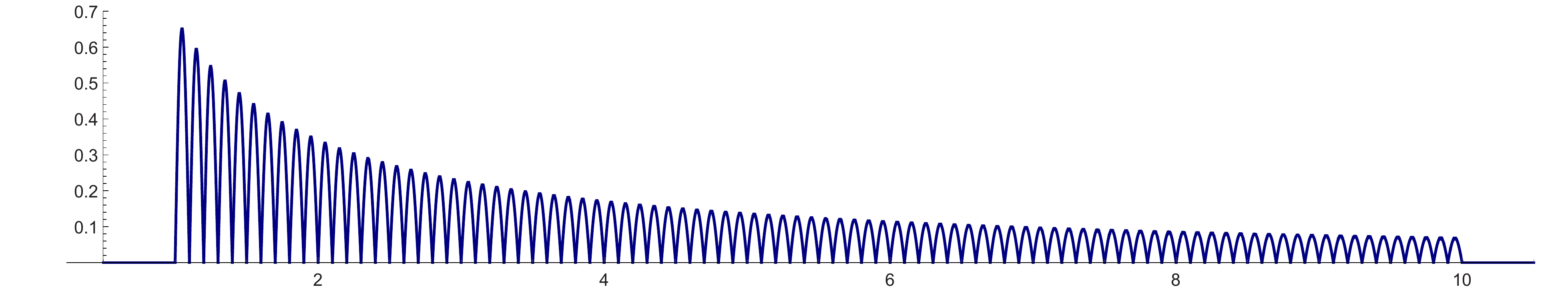}
\caption{The pdf of a $2$-digit Benford variable}
\label{fig:2digit-Benford}
\end{figure}
We can check that:
$P(D_1(Y) = d_1, D_2(Y) = d_2)=\log \Big(1+ (10\, d_1 + d_2)^{-1}\Big)$. From 
lemma \ref{kBen} this is a 1-digit Benford variable as well. However, $Y$ is 
not a $3$-digit 
Benford variable, since for example 
$P(D_1(Y)=1,D_2(Y)=1,D_3(Y)=1)= 2.86\times 10^{-3}$ instead of $3.89\times 
10^{-3}$ as required by 
(\ref{eq:nBen}).

\section{Sum invariance}
\label{sed:SI}

To define sum invariance, we first define the significand 
function, also known as the mantissa function.

\begin{definition}
	The significand function  $S:\R^+ \to [1,10)$ is defined as
	\begin{equation*}
	S(x)=10^{\log x - \lfloor{\log x }\rfloor}~,
	\end{equation*}
where $\lfloor x \rfloor$ denotes the floor of $x$.
\end{definition}
Let us consider a finite collection of positive real numbers ${K}$, and define 
$S_{d_1\cdots d_n}$ 
to be the sum of the significands of the numbers starting with the sequence of 
digits $d_1\cdots d_n$. Sum invariance means that $S_{d_1\cdots d_n}$ is 
digit independent. For instance, consider the Fibonacci sequence which is known 
to be Benford \cite{Duncan-1967}. Then for the first $50000$ Fibonacci numbers 
we obtain

\begin{table}[htb]
	\label{tab:fib}
{\footnotesize 
\begin{tabular}{|c|c|c|c|c|c|c|c|c|c|}
	\hline  
	$S_1$ & $S_2$  & $S_3$ & $S_4$ & $S_5$ & $S_6$  & $S_7$  & $S_8$ & $S_9$  	\\ 
	\hline  
	21714.0 & 21712.2  & 21717.8  & 21707.4 & 21713.2 & 21725.0 & 21702.7 & 21717.4 & 21715.5   \\ 
	\hline 
\end{tabular} 
}
\vspace{1ex}
\caption{Sum invariance illustration for the first $50000$ Fibonacci 
numbers}
\end{table}
\vspace{-4ex}
\noindent
where $S_1$ denotes the sum of all significands starting with $1$, etc.

Nigrini was the first to notice sum invariance in some large collections of 
data \cite{Nigrini-1992}. Allaart \cite{Allaart-1997} refined this concept, by 
defining it in connection with continuous random variables.  Specifically, a 
distribution is sum invariant if the expected value of the significands of all 
entries starting with a fixed $n$-tuple of leading significant digits is the 
same as for any other $n$-tuple: $\E\left[S_{d_1\cdots d_n} Y\right]=	
\E\left[ S_{d_1'\cdots d_n'} Y\right]$. Allaart showed that a random variable 
is sum invariant if and only if it is Benford. Berger \cite{Berger-2011} proved 
that for sum invariant random variables
\begin{equation}
\label{ExpectedSum}
	\E\left[ S_{d_1\cdots d_n} Y\right] = \frac{10^{1-n}}{\ln 10}.
\end{equation}
For example, for a Benford sequence with $50000$ elements, formula  
(\ref{ExpectedSum}) yields $S_1=\cdots=S_9=21714.7$ rounded to 
the tenths, which is very close to the actual values for the Fibonacci numbers 
illustrated in table 1. Naturally, the more numbers are taken from the 
sequence, the closer one gets to the theoretical sum.

\section{Main result}
\label{sec-main}

A random variable is sum invariant if and only if it is Benford 
\cite{Allaart-1997,Berger-2011}.  Using this result, we will prove that an 
$n$-digit 
Benford variable converges to Benford as $n$ approaches infinity 
by calculating the bounds for the expected value of its significand.

Given a function $g:\R\to\R$, we define  $g^{\dagger}:\R \to[0,1)$ as
\[
g^{\dagger}(x) = \begin{cases}
\sum_{k\in \Z}g(x+k), & \forall x\in [0,1) ,\\
0, ~&\textrm{otherwise.}
\end{cases}
\]

\begin{lemma}
\label{expected}
Let $Y$ and $X=\log Y$ be two random variables with the probability density 
functions $f$ and $g$, respectively. Then
\begin{equation}
\label{eq:sum}
	\E\left[ S_{d_1\cdots d_m} Y\right] = 
	\int_{\log(d_1+\cdots \frac{d_m}{10^{m-1}})}^{\log(d_1+\cdots \frac{d_m+1}{10^{m-1}})}
	\!\!\!
	10^x\,g^{\dagger}(x)\,dx ~.
\end{equation}

\end{lemma}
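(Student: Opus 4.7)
The plan is to unpack the notation $\E[S_{d_1\cdots d_m} Y]$ explicitly, convert the event ``$Y$ starts with $d_1\cdots d_m$'' into a condition on the fractional part of $X = \log Y$, and then fold the resulting real-line integral onto $[0,1)$ via the defining sum of $g^\dagger$.

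First, I would recall that by the definition used in the sum-invariance discussion, $S_{d_1\cdots d_m} Y$ stands for $S(Y)\,\mathbf 1_{\{D_1(Y)=d_1,\ldots,D_m(Y)=d_m\}}$. Writing $a = \log(d_1+\cdots+d_m/10^{m-1})$ and $b = \log(d_1+\cdots+(d_m+1)/10^{m-1})$, the event $\{D_1(Y)=d_1,\ldots,D_m(Y)=d_m\}$ is the same as $S(Y) \in [d_1+\cdots+d_m/10^{m-1},\, d_1+\cdots+(d_m+1)/10^{m-1})$, and taking $\log$ this is the same as $\{X\} \in [a,b)$, where $\{x\} = x - \lfloor x \rfloor$. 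Moreover, from the definition of $S$, one has $S(Y) = 10^{\{X\}}$. Thus
\[
\E[S_{d_1\cdots d_m} Y] \;=\; \int_{\R} 10^{\{x\}}\,\mathbf 1_{\{x\}\in[a,b)}\,g(x)\,dx.
\]

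Next I would decompose the real line as $\R = \bigsqcup_{k\in\Z}[k,k+1)$ and, on each piece, substitute $x = u+k$ with $u\in[0,1)$, noting that $\{u+k\} = u$:
\[
\int_{[k,k+1)} 10^{\{x\}}\,\mathbf 1_{\{x\}\in[a,b)}\,g(x)\,dx \;=\; \int_{a}^{b} 10^{u}\,g(u+k)\,du.
\]
Summing over $k\in\Z$ and interchanging sum and integral (justified by nonnegativity of $g$ and boundedness of $10^u$ on $[0,1)$, via Tonelli), I get
\[
\E[S_{d_1\cdots d_m} Y] \;=\; \int_{a}^{b} 10^{u}\,\sum_{k\in\Z}g(u+k)\,du \;=\; \int_{a}^{b} 10^{u}\,g^{\dagger}(u)\,du,
\]
which is exactly equation (\ref{eq:sum}).

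There is no real obstacle here beyond bookkeeping; the only subtle point is the unconditional interchange of summation and integration, which is immediate from Tonelli because $g \ge 0$, so the whole proof reduces to a clean change-of-variable argument built on the identity $S(Y) = 10^{\{\log Y\}}$.
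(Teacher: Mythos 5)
Your proof is correct and takes essentially the same route as the paper's: both decompose the expectation over the integer translates indexed by $k\in\Z$, change variables to land on the interval $[\log(d_1+\cdots+\frac{d_m}{10^{m-1}}),\log(d_1+\cdots+\frac{d_m+1}{10^{m-1}})]$, and fold the sum into $g^{\dagger}$. The only cosmetic difference is that you work in the variable $x=\log y$ from the outset (using $S(Y)=10^{\{X\}}$) and explicitly invoke Tonelli for the interchange, whereas the paper starts from the integral in $y$ with $f(y)=g(\log y)/(y\ln 10)$ and substitutes afterward.
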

\begin{proof}
Using $f(y)=g(\log y)/(y\ln 10)$, we get
	\begin{eqnarray*}
	\E\left[ S_{d_1\cdots d_m} Y\right] &=& \int_{-\infty}^{\infty} 
	S_{d_1\cdots 
	d_m} (y) f(y) dy \\
	& = & \sum_{k\in \Z} 
	\int_{10^k(d_1+\cdots+\frac{d_m}{10^{m-1}})}^{10^k(d_1+\cdots+\frac{d_m+1}{10^{m-1}})}
	y\,10^{-k}\, \frac{g (\log y)}{y \ln 10} dy \\
	& = & 
	\int_{\log(d_1+\cdots+\frac{d_m}{10^{m-1}})}^{\log(d_1+\cdots+\frac{d_m+1}{10^{m-1}})}
	10^{x} \sum_{k\in \Z} g(x+k)\,dx~.
	\end{eqnarray*}
\end{proof}

It is known that a necessary and sufficient 
condition for a random variable to be Benford is that $g^{\dagger}=1$  
\cite{Diaconis-1977,Berger-2011}. 
Consequently, equation (\ref{ExpectedSum}) follows immediately from lemma 
\ref{expected}.

There are arbitrary many ways in which we can build a $n$-digit Benford 
variable. Let $\mathcal{B}_n$ be the infinite collection of all $n$-digit 
Benford variables. We use $\E\left[ S_{d_1\cdots d_n} \mathcal{B}_n\right]$ to 
denote the collection of the expected values of the significands of the 
elements of  $\mathcal{B}_n$. 
The next theorem leads to the main result of our 
paper. It provides the bounds for the expected value $\E\left[S_{d_1\cdots d_n} 
Y\right]$ for $Y \in \mathcal{B}_n$.  

\begin{theorem}
\label{n-dig-Ben}
	Let $Y \in \mathcal{B}_n$. 
	Then
\begin{equation}
\label{eq:theo}
10^{1-n} 
\log\left( 1 + \frac{1}{x_n} \right)^{x_n}  \le 
\E\left[ S_{d_1\cdots d_n} Y\right] \le
10^{1-n} 
\log\left( 1 + \frac{1}{x_n} \right)^{x_n+1}
\end{equation}
where $x_n = 10^{n-1}d_1 + \cdots + d_n$.
\end{theorem}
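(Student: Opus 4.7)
The plan is to bound the integral in Lemma \ref{expected} by replacing the factor $10^x$ with its extreme values on the interval of integration, and then to evaluate $\int g^{\dagger}$ over that interval using the defining property of $n$-digit Benford variables.

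First I would apply Lemma \ref{expected} with $m=n$, writing
\[
\E\left[ S_{d_1\cdots d_n} Y\right] = \int_{a}^{b} 10^{x}\, g^{\dagger}(x)\,dx,
\]
where $a = \log\bigl(d_1+\cdots+d_n/10^{n-1}\bigr)$ and $b = \log\bigl(d_1+\cdots+(d_n+1)/10^{n-1}\bigr)$. The key algebraic observation is that in terms of $x_n = 10^{n-1}d_1+\cdots+d_n$ we have $10^{a} = x_n/10^{n-1}$ and $10^{b} = (x_n+1)/10^{n-1}$, so the interval length is $b-a = \log(1+1/x_n)$, and $10^x$ is sandwiched between $10^{a}$ and $10^{b}$ throughout $[a,b]$.

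Next I would compute $\int_a^b g^{\dagger}(x)\,dx$. Unfolding the definition of $g^{\dagger}$ and changing variables $y = 10^{x+k}$ in each $k$-th piece (which is the same substitution used in the proof of Lemma \ref{expected}, only without the extra factor $10^x$), this integral equals
\[
\sum_{k\in\Z}\int_{10^{k}(d_1+\cdots+d_n/10^{n-1})}^{10^{k}(d_1+\cdots+(d_n+1)/10^{n-1})} f(y)\,dy = P\bigl(D_1(Y)=d_1,\ldots,D_n(Y)=d_n\bigr),
\]
which by (\ref{eq:nBen}) equals $\log(1+1/x_n)$. This is the step where the $n$-digit Benford hypothesis is used, and it is the only place in the argument where it enters.

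Combining the two ingredients, the bounds $10^{a}\le 10^x \le 10^{b}$ on $[a,b]$ immediately yield
\[
\frac{x_n}{10^{n-1}}\log\!\left(1+\tfrac{1}{x_n}\right) \;\le\; \E\left[ S_{d_1\cdots d_n} Y\right] \;\le\; \frac{x_n+1}{10^{n-1}}\log\!\left(1+\tfrac{1}{x_n}\right),
\]
and pulling the exponents inside the logarithm gives precisely (\ref{eq:theo}). I do not expect any real obstacle: the calculation is essentially a mean-value bound on $10^x$ combined with the identification of $\int_a^b g^{\dagger}$ as the $n$-digit Benford probability; the only point requiring a little care is the bookkeeping of $a$, $b$, and the sum over $k\in\Z$ to recognize $g^{\dagger}$.
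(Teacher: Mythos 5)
Your proof is correct, and it reaches the stated bounds by a genuinely more direct route than the paper. Write $a=\log\bigl(d_1+\cdots+d_n/10^{n-1}\bigr)$ and $b=\log\bigl(d_1+\cdots+(d_n+1)/10^{n-1}\bigr)$. The paper also starts from Lemma \ref{expected}, but then integrates by parts, re-expressing $\E\left[S_{d_1\cdots d_n}Y\right]$ in terms of the cumulative integral $\int_0^s g^{\dagger}(x)\,dx$; the $n$-digit Benford hypothesis enters through equation (\ref{eq:ints}), which fixes this cumulative integral at the endpoints $a$ and $b$ and confines the remainder $\int_a^s g^{\dagger}(x)\,dx$ to $[0,\log(1+1/x_n)]$, the two bounds coming from the two extreme cases. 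You dispense with the integration by parts entirely: you invoke the hypothesis exactly once, to identify $\int_a^b g^{\dagger}(x)\,dx$ with $P\bigl(D_1(Y)=d_1,\ldots,D_n(Y)=d_n\bigr)=\log(1+1/x_n)$, and then sandwich the weight $10^x$ between $10^a=x_n\,10^{1-n}$ and $10^b=(x_n+1)\,10^{1-n}$ using $g^{\dagger}\ge 0$. The two estimates are equivalent (they correspond under the integration by parts), but yours is shorter and makes completely transparent where the $n$-digit Benford condition is used; the paper's formulation has the side benefit of exhibiting the remaining degree of freedom --- how the mass of $g^{\dagger}$ is distributed inside $[a,b]$ --- which makes it plausible that the bounds are essentially attained over the class $\mathcal{B}_n$, a point your argument does not address but which the theorem as stated does not require.
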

\begin{proof}
We will calculate the lower and upper bounds of  $\E\left[ S_{d_1\cdots d_n} 
Y\right]$ using the fact that 
$\int_0^s g^{\dagger}(x) dx$ is monotonically increasing  with $s$, where $g$ 
is the probability density function of $\log Y$. 
From lemma \ref{expected},  we obtain
\begin{eqnarray*}
\E\left[ S_{d_1\cdots d_n} Y\right] &=& (d_1+\cdots + 
\frac{d_n+1}{10^{n-1}})
\log\left(d_1+\cdots \frac{d_n+1}{10^{n-1}}\right) \\
&&- (d_1+\cdots \frac{d_n}{10^{n-1}})\log\left(d_1+\cdots + 
\frac{d_n}{10^{n-1}}\right) \\
&& - \int_{\log(d_1+\cdots \frac{d_n}{10^{n-1}})}^{\log(d_1+\cdots 
\frac{d_n+1}{10^{n-1}})}
 \!\!\! 10^s \ln 10  \int_0^s
 g^{\dagger}(x)\,dx \,ds.
\end{eqnarray*}
Since $Y \in \mathcal{B}_n$, we get
\begin{equation}
\label{eq:ints}
\int_0^s g^{\dagger}(x) dx = \log\left(d_1+\cdots + \frac{d_n}{10^{n-1}}\right) 
+ \int_{\log(d_1+\cdots + 
\frac{d_n}{10^{n-1}})}^s g^{\dagger}(x) dx ~.
\end{equation}
The second term in (\ref{eq:ints}) can take any value between $0$ and 
$\log(1+1/(10^{n-1}d_1+\cdots+d_n))$, since $g^{\dagger}(x)$ is only 
constrained by its total area over the interval 
$$
\Big[\log(d_1+\cdots + \frac{d_n}{10^{n-1}}),~\log(d_1+ \cdots + \frac{d_n+1}{10^{n-1}})\Big].
$$ 
It follows that 
\begin{equation}
\label{eq:min}
10^{1-n} 
\log\left( 1 + \frac{1}{x_n} \right)^{x_n} \le \E\left[S_{d_1\cdots d_n} 
Y\right]~, \forall\, Y \in \mathcal{B}_n
\end{equation}
where $x_n = 10^{n-1}d_1 + \cdots + d_n$. 
Similarly we obtain
\begin{equation}
\label{eq:max}
\E\left[ S_{d_1\cdots d_n} Y\right] \le 10^{1-n} 
\log \left( 1 + \frac{1}{x_n} \right)^{x_n} 
+ 10^{1-n} \log\left( 1 + \frac{1}{x_n} \right)~, \forall\, Y \in \mathcal{B}_n
\end{equation}
which completes the proof.
\end{proof}

As $n\to \infty$, both lower and upper bounds of $\E\left[ S_{d_1\cdots d_n} 
\mathcal{B}_n\right]$ approach $\frac{10^{1-n}}{\ln 10}$, proving the sum 
invariance 
\cite{Berger-2011}. 
Consequently, the $n$-digit Benford variable converges to Benford.



\begin{thebibliography}{1}
	\expandafter\ifx\csname url\endcsname\relax
	\def\url#1{\texttt{#1}}\fi
	\expandafter\ifx\csname urlprefix\endcsname\relax\def\urlprefix{URL }\fi
	\expandafter\ifx\csname href\endcsname\relax
	\def\href#1#2{#2} \def\path#1{#1}\fi
	
	\bibitem{Benford-1938}
	F.~Benford, The law of anomalous numbers, Proceedings of the American
	Philosophical Society 78~(4) (1938) 551--572.
	
	\bibitem{Hill-1995}
	T.~P. Hill, Base-invariance implies {B}enford's law, Proceedings of the
	American Mathematical Society 123~(3) (1995) 887--895.
	
	\bibitem{Berger-2011}
	A.~Berger, T.~P. Hill, A basic theory of {B}enford's law, Probability Surveys 8
	(2011) 1--126.
	
	\bibitem{Khosravani-2013}
	A.~Khosravani, C.~Rasinariu, $n$-digit {B}enford distributed random variables,
	Advances and Applications in Statistics 36~(2) (2013) 119--130.
	
	\bibitem{Duncan-1967}
	R.~L. Duncan, An application of uniform distribution to the {F}ibonacci
	numbers, FibonacciQuart. 5 (1967) 137--140.
	
	\bibitem{Nigrini-1992}
	M.~J. Nigrini, The detection of income tax evasion through an analysis of
	digital frequencies, PhD thesis, University of Cincinnati, OH, USA. (1992).
	
	\bibitem{Nigrini-2012}
	M.~J. Nigrini, \textit{Benford's Law: Applications for Forensic Accounting, Auditing, and Fraud Detection}, Wiley, 2012.
	
	\bibitem{Allaart-1997}
	P.~C. Allaart, An invariant-sum characterization of {B}enford's law, Journal of
	Applied Probability 34~(1) (1997) 288--291.
	
	\bibitem{Diaconis-1977}
	P.~Diaconis, The distribution of leading digits and uniform distribution mod 1,
	The Annals of Probability 5~(1) (1977) 72--81.
	
\end{thebibliography}
\end{document}